\newtheorem{thm}{Theorem}[section]
\newtheorem{prop}[thm]{Proposition}
\newtheorem{conj}[thm]{Conjecture}
\newtheorem{ex}[thm]{Example}
\theoremstyle{definition}
\newtheorem*{defn}{Definition}
\newtheorem{case}{Case}
\newtheorem{subcase}{Case}[case]
\definecolor{red}{rgb}{.6,0,0}
\definecolor{blu}{rgb}{0,0,1}
\newcommand{\lp}{\left(}
\newcommand{\rp}{\right)}
\newcommand{\bit}{\begin{itemize}}
\newcommand{\eit}{\end{itemize}}
\newcommand{\ben}{\begin{enumerate}}
\newcommand{\een}{\end{enumerate}}
\newcommand{\beq}{\begin{equation}}
\newcommand{\eeq}{\end{equation}}
\newcommand{\bea}{\begin{eqnarray*}}
\newcommand{\eea}{\end{eqnarray*}}
\newcommand{\bpf}{\begin{proof}}
\newcommand{\epf}{\end{proof}}
\mathchardef\mhyphen="2D
\newcommand\mrcr{\max\mhyphen\overline{\textnormal{cr}}}
\title{A Note on the Maximum Rectilinear Crossing Number of Spiders}
\author{ 
Joshua Fallon \thanks{Louisiana State University, email: \href{mailto:jfallo3@lsu.edu}{jfallo3@math.lsu.edu}}
\and Kirsten Hogenson \thanks{Colorado College, email: \href{mailto:khogenson@coloradocollege.edu}{khogenson@coloradocollege.edu}}
\and Lauren Keough \thanks{Grand Valley State University, email: \href{mailto:keoulaur@gvsu.edu}{keoulaur@gvsu.edu}}
\and Mario Lomel\'{\i} \thanks{Universidad Aut\'onoma de San Luis Potos\'{\i}, email: \href{mailto:lomeli@ifisica.uaslp.mx}{lomeli@ifisica.uaslp.mx}}
\and Marcus Schaefer \thanks{DePaul University, email: \href{mailto:MSchaefer@cdm.depaul.edu}{MSchaefer@cdm.depaul.edu}}
\and Pablo Sober\'on \thanks{Baruch College, CUNY email: \href{mailto:pablo.soberon-bravo@baruch.cuny.edu}{pablo.soberon-bravo@baruch.cuny.edu}} }
\tikzset{
insep/.style={inner sep=2pt, outer sep=0pt, circle,fill}, 
free/.style={inner sep=2pt, outer sep=0pt, circle,fill=gray,draw}, 
extra/.style={inner sep=2pt, outer sep=0pt, circle,fill=white,draw}, 
}
\begin{document}

\maketitle


\begin{abstract}
	The maximum rectilinear crossing number of a graph $G$ is the maximum number of crossings in a good straight-line drawing of $G$ in the plane.  In a good drawing any two edges intersect in at most one point (counting endpoints), no three edges have an interior point in common, and edges do not contain vertices in their interior.  A spider is a subdivision of $K_{1,k}$.  We provide both upper and lower bounds for the maximum rectilinear crossing number of spiders.  While there are not many results on the maximum rectilinear crossing numbers of infinite families of graphs, our methods can be used to find the exact maximum rectilinear crossing number of $K_{1,k}$ where each edge is subdivided exactly once.  This is a first step towards calculating the maximum rectilinear crossing number of arbitrary trees.
\end{abstract}

\section{Introduction}
\label{sec:intro}

 Planar graphs, which are graphs that can be embedded in the plane such that no two edges intersect except at their endpoints, have long been of interest to the mathematical community. Kuratowski's theorem from 1930 gives us a characterization of which graphs are planar \cite{Kuratowski}.  For graphs that are not planar, it is natural to wonder which drawings in the plane have the fewest crossings.  Tur\'{a}n did just this during his time in a labor camp during World War II \cite{Turan}.

\begin{defn} 
The \emph{crossing number} of a graph $G$ is the minimum number of crossings over all drawings of $G$.
\end{defn}

The crossing number for $K_{m,n}$, originally investigated by Tur\'{a}n, is still unknown.  The conjecture for the optimal bound is known as the Zarankiewicz conjecture \cite{Zarankiewicz}. Similarly, finding the crossing number of $K_n$ is open and the conjecture for the optimal bound is known as Hill's conjecture \cite{HillsConjecture}.  The problem of finding the crossing number of a given graph is NP-complete \cite{CrossingNumberNPComplete}.  For some recent problems on crossing numbers see Chapter 13 of \cite{SzekelyBook}.

There are many variants of the crossing number parameter; for a detailed survey see \cite{Schaefer}.  One such variant is the maximum crossing number which aims at maximizing the number of crossings. For this to make sense, we must insist that we have a good drawing. In a \emph{good} drawing, no edge can cross itself, each pair of edges have at most one point in common (including endpoints), and no more than two edges can cross at any point. 

\begin{defn}
The \emph{maximum crossing number} of a graph $G$  is the maximum number of crossings in a good drawing of $G$ in the plane. We denote the maximum crossing number of $G$ as $\max\mhyphen\text{cr}(G)$. 
\end{defn}

For the remainder of this paper, all drawings will be good drawings in the plane.
By definition a graph $G$ with $m$ edges can have at most $\binom{m}{2}$ crossings in a good drawing. Since incident edges cannot cross each other in a good drawing, the actual number of crossings will be even smaller in general. This observation gives us a natural upper bound   to the maximum crossing number of a graph.
\begin{defn}
 The \emph{thrackle bound} for a graph $G$ with edge set $E(G)$ and vertex set $V(G)$ is
	\[\vartheta(G) = \binom{|E(G)|}{2} - \sum_{u\in V(G)}\binom{\deg(u)}{2} \]
\end{defn}

The thrackle bound is an upper bound for the maximum crossing number \cite{thrackleUB}, as it counts the number of crossings we would have if every pair of non-incident edges crossed.  
A graph is called \emph{thrackleable} if it has a drawing which achieves the thrackle bound. 

Among the most difficult problems concerning maximum crossing numbers is Conway's thrackle conjecture.  Conway conjectures that a thrackleable graph on $n$ vertices cannot have more than $n$ edges. As of 2017, Conway is offering \$1000 for a proof \cite{Conway1000}, which is up from his initial bounty of ten shillings and six pence offered in 1969 \cite{Woodall}.   Much work has been done on this conjecture \cite{Conway1.428n, Conway2n}. At the time of this note's writing, the most recent result posted on arXiv shows that any thrackleable graph on $n$ vertices has at most $1.3984n$ edges \cite{Conway1.3984}.  Conway's thrackle conjecture is true for the rectilinear drawings \cite{Woodall}.

Another interesting open problem concerning maximum crossing numbers is the monotonicity conjecture. For most variations of crossing numbers, monotonicity holds.  That is, if $H$ is a subgraph of $G$ then $\phi(H)$ is at most $\phi(G)$ where $\phi$ is the crossing number variant. The most striking exception is the maximum crossing number, for which monotonicity is an open problem even for induced subgraphs \cite{Schaefer}. Monotonicity is known to hold for rectilinear drawings, a result due to Ringeisen, Stueckle, and Piazza \cite{Monotonicity}.

\begin{defn}
A \emph{rectilinear drawing} of a graph $G$ is a drawing in which all edges are straight line segments. The \emph{maximum rectilinear crossing number} of a graph $G$, $\mrcr(G)$, is the maximum number of crossings in any rectilinear drawing of $G$.
\end{defn}

The maximum rectilinear crossing number has been rediscovered several times under different names, including obfuscation complexity \cite{Obfuscation}.

Since $\mrcr(G) \leq \max\mhyphen\text{cr}(G)$, the thrackle bound is also an upper bound for the maximum rectilinear crossing number. Conway raised the problem of classifying all thrackleable graphs.  In \cite{Woodall}, Woodall gives a complete classification assuming that the thrackle conjecture is true.  He also considers the rectilinear case.  In Woodall's classification of graphs with rectilinear drawings that obtain the thrackle bound, he finds the exact maximum rectilinear crossing number for a particular spider.  A \emph{spider} is a subdivision of $K_{1,k}$.  Each path radiating out from the degree $k$ vertex of the spider is a \emph{leg}.  In general, the legs of the spider may be of different lengths. For the spider with $k$ legs in which each leg has length $2$ we write $S_k^2$.  The graph $S_3^2$ is typically called $T_2$.

\begin{thm}[Woodall~\cite{Woodall}]\label{thm:S333}
The graph $S_3^2$ does not have a rectilinear drawing that obtains the thrackle bound.  In fact, $\mrcr(S_3^2) = 8 = \vartheta(S_3^2)-1$.
\end{thm}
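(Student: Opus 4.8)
The plan is to prove both bounds directly, writing the center of the spider as $c$, the three spokes as $e_i=cm_i$ and the three leaf edges as $f_i=m_i\ell_i$ for $i\in\{1,2,3\}$. First I would record that $\vartheta(S_3^2)=\binom 62-(3+3\cdot 1)=9$, and that the nine non-adjacent pairs counted by $\vartheta$ are exactly the three leaf--leaf pairs $f_i,f_j$ ($i<j$) together with, for each spoke $e_i$, its two crossings with the opposite leaf edges $f_j,f_k$. The lower bound $\mrcr(S_3^2)\ge 8$ is then a construction: I would give explicit coordinates for the seven vertices realizing eight of these nine crossings (for instance, placing $m_1,m_2,m_3,\ell_1,\ell_2,\ell_3$ so that $f_1,f_2,f_3$ pairwise cross while five of the six spoke--leaf pairs cross) and verify the crossing list by inspection.

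For the upper bound I would argue by contradiction: suppose a rectilinear drawing attains all nine crossings. Then $f_1,f_2,f_3$ pairwise cross, so they determine a central triangle $\Delta$ whose vertices are the pairwise intersection points $P_{12},P_{13},P_{23}$, with each leaf edge $f_i$ passing through two of these points and extending to its endpoints $m_i,\ell_i$ beyond them. The key step is to read off, purely from $\Delta$, the side of each leaf line on which each of the six endpoints lies. Orienting $\Delta$ (e.g.\ declaring the side of the line $f_1$ containing $P_{23}$ to be its positive side, and cyclically for $f_2,f_3$), each endpoint's side with respect to the two other leaf lines is completely forced by whether it lies beyond one crossing point or the other; I would tabulate these six sign assignments.

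Next I would convert the six required spoke--leaf crossings into side conditions. Since $e_1=cm_1$ must cross both $f_2$ and $f_3$, the point $c$ lies opposite $m_1$ across each of the lines $f_2$ and $f_3$; running this for all three spokes forces three \emph{same-side} conditions that do not involve the actual location of $c$: namely $m_2,m_3$ lie on the same side of $f_1$; $m_1,m_3$ lie on the same side of $f_2$; and $m_1,m_2$ lie on the same side of $f_3$. Encoding the orientation of each $f_i$ by a single bit $x_i$ (which of its two endpoints is $m_i$), the tabulated signs turn these three conditions into $x_2=x_3$, $x_1\ne x_3$, and $x_1=x_2$, which are jointly unsatisfiable. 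Hence the nine crossings cannot coexist, so $\mrcr(S_3^2)\le 8$, and with the construction $\mrcr(S_3^2)=8=\vartheta(S_3^2)-1$.

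The step I expect to be most delicate is the sign tabulation. It is tempting to claim that three pairwise-crossing segments have their six endpoints in convex position and then reason from a hexagon, but this is false in general: one endpoint can fall inside the convex hull of the other five, so a convex-position argument would be unsound. The robust route is to work with the central triangle $\Delta$, which always exists, and to determine each endpoint's side of each leaf line from its position relative to $\Delta$. Getting these signs right --- and verifying they are forced by the crossing structure rather than depending on the particular configuration (a single explicit coordinate realization of $\Delta$ suffices to check the table) --- is the crux; once it is in place, the parity contradiction among the three same-side conditions is immediate.
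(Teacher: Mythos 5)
Your proposal is correct in substance, but note that the paper itself contains no proof of Theorem~\ref{thm:S333} --- it quotes the result from Woodall --- so what you have written is a self-contained argument where the paper offers only a citation. I checked your crux and it holds: in a good drawing the three pairwise-crossing leaf edges have both crossing points interior and distinct (no three edges share an interior point), so $\Delta=P_{12}P_{13}P_{23}$ is a nondegenerate triangle, each $f_i$ has exactly one endpoint beyond each of its two triangle vertices, and the sign table is forced: e.g.\ the endpoint of $f_1$ beyond $P_{12}$ lies opposite $P_{13}$ across the line of $f_2$ and on the $P_{12}$-side of the line of $f_3$, and cyclically. The crossings $e_i$ with $f_j,f_k$ need only the necessary condition that crossing segments have endpoints strictly on opposite sides of each other's line (collinear degeneracies being excluded by goodness), which places $c$ opposite all three $m_i$ and hence yields exactly your three same-side conditions; with the natural convention these become $x_2=x_3$, $x_1\neq x_3$, $x_1=x_2$, and since flipping any bit $x_i$ flips two of the three constraints, the odd number of ``unequal'' constraints is convention-independent, so the contradiction is robust. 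Your caution about the false convex-position shortcut is well placed, and your observation that one affine realization of $\Delta$ certifies the table is sound because your positive-side convention (the side containing the third vertex) is affine- and reflection-invariant. Two points of execution remain: you promise but do not exhibit the eight-crossing drawing --- this can be discharged by the paper's own Proposition~\ref{prop:LB} applied to $S_3^2$ (its proof is independent of Theorem~\ref{thm:S333}, so there is no circularity), which gives $\mrcr(S_3^2)\geq \vartheta(S_3^2)-1=8$ --- and the six-row sign table should be written out rather than gestured at. Compared with Woodall's original case analysis, your triangle-plus-parity argument is more compact and would fit naturally into the paper as a proof of the quoted theorem.
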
 

This result contrasts with the maximum crossing number case, since all trees are thrackleable. Our long-term goal is to better understand the maximum rectilinear crossing numbers of trees, Theorem~\ref{thm:S333} shows that this will require new ideas right from the start. In the current paper we focus on spider graphs, and use Theorem \ref{thm:S333} to compute an upper bound on the maximum rectilinear crossing number of spider graphs.  In Section \ref{sec:LB} we provide an algorithm that gives a lower bound on the maximum rectilinear crossing number of spiders with at least $3$ legs.  We conjecture that this algorithm gives the maximum rectilinear crossing number for spiders.  In Section \ref{sec:UB} we use Mantel's theorem to give an upper bound on $\mrcr(S)$ for any spider $S$.  Letting $S_k^2$ be the spider in which each path is of length two, Woodall proves that $\mrcr(S_3^2)=\vartheta(S_3^2)-1$.  In Theorem~\ref{thm:Len2} We extend this result by showing that $\mrcr(S_k^2) = \vartheta(S_k^2) - \binom{k}{2} + \left\lfloor\frac{k^2}{4}\right\rfloor$.

 For additional information regarding graph theory definitions and notation, we refer the reader to \cite{West}.
 

\subsection*{Previous Results}

Finding the maximum rectilinear crossing number for a given graph is  NP-hard \cite{mrcrNPHard}, so it is not surprising that there are only a few exact results for the maximum rectilinear crossing number of infinite families of graphs; solved cases include complete $k$-partite graphs \cite{H76}, cycles \cite{S23}, and wheels \cite{mrcrWheels}, and there is a lower bound for $n$-dimensional hypercubes which is conjectured to be exact \cite{maxcrossingcube}.  The maximum rectilinear crossing number has also been computed for $C_5 \times C_5$ \cite{C5xC5},  the Petersen graph \cite{mrcrPetersen}, and $Q_3$, the $3$-dimensional hypercube \cite{maxcrossingcube}.  For some other lesser known families of graphs, see \cite{mrcrGameBoards,mrcrPolynomio}.  In another direction, some work has been done to determine the maximum and minimum values of the maximum rectilinear crossing number in a given family \cite{mrcrPQgraphs,mrcrMaxMinChordedCycles,mrcrMaxMin2reg,mrcrMaxMinSmallCubic,mrcrMaxMinRegular}.





\section{A Lower Bound}
\label{sec:LB}


Let $S$ be a spider  with $k\geq 3$ legs $L_1,L_2,\dots,L_k$.  Let $\ell_i$ denote the number of edges in leg $L_i$, with $\ell_1 \geq \ell_2 \geq \cdots \geq \ell_k \geq 2$. In this section we give an algorithm for constructing a rectilinear drawing of $S$ and use it to find a lower bound on $\mrcr\lp S\rp$.

We begin by labeling the vertices of $S$ as follows. Label the degree-$k$ vertex $(0,0)$. Assign the label $(i,j)$ to the vertex on $L_i$ at distance $j$ from $(0,0)$. We place the vertices of $S$ anticlockwise on a circle according to the cyclic order $[V_0,V_1,V_2,V_3,V_4]$, where the vertices in each subsequence $V_i$ are ordered as follows: 
\begin{itemize}
\item $V_0$: $(0,0)$
\item $V_1$: vertices with $i$ odd and $j$ even, sorted by decreasing $i$ then increasing $j$
\item $V_2$: vertices with $i$ even and $j$ odd, sorted by increasing $i$ then decreasing $j$
\item $V_3$: vertices with both $i$ and $j$ odd, sorted by decreasing $i$ then increasing $j$
\item $V_4$: vertices with both $i$ and $j$ even and positive, sorted by increasing $i$ then decreasing $j$
\end{itemize}
After placing all vertices we draw the edges of $S$ as straight-line segments. If necessary, we perturb the locations of the vertices to ensure our drawing is good.

\begin{ex}
In Figure \ref{fig:algorithm} we draw a spider with legs of length $4$, $3$, $2$, and $2$ according to the algorithm. Call this graph $S$. There are $\vartheta(S) - 2 = 40$ crossings.  The two missed crossings are between the edges $(0,0)-(2,1)$ and $(4,1)-(4,2)$ and between the edges $(0,0)-(1,1)$ and $(3,1)-(3,2)$.  Every other pair of non-incident edges crosses.
\begin{figure}[!ht]
\begin{center}
\begin{multicols}{2}
\begin{tikzpicture}[scale=1.2]
\fill (0,0) circle (2pt);
\fill (1,0) circle (2pt);
\fill (2,0) circle (2pt);
\fill (3,0) circle (2pt);
\fill (4,0) circle (2pt);
\draw (0,0) -- (1,0);
\draw (1,0) -- (2,0);
\draw (2,0) -- (3,0);
\draw (3,0) -- (4,0);
\draw (.3,.25) node {\tiny $(0,0)$};
\draw (1.3,.25) node {\tiny $(1,1)$};
\draw (2.3, .25) node {\tiny $(1,2)$};
\draw (3.3, .25) node {\tiny $(1,3)$};
\draw (4.3, .25) node {\tiny $(1,4)$};
\fill (0,1) circle (2pt);
\fill (0,2) circle (2pt);
\fill (0,3) circle (2pt);
\draw (0,0) -- (0,1);
\draw (0,1) -- (0,2);
\draw (0,2) -- (0,3);
\draw (.4,1) node {\tiny $(2,1)$};
\draw (.4, 2) node {\tiny $(2,2)$};
\draw (.4,3) node {\tiny $(2,3)$};
\fill (-1,0) circle (2pt);
\fill (-2,0) circle (2pt);
\draw (0,0) -- (-1,0);
\draw (-1,0) -- (-2,0);
\draw (-1.3, .25) node {\tiny $(3,1)$};
\draw (-2.3, .25) node {\tiny $(3,2)$};
\fill (0,-1) circle (2pt);
\fill (0,-2) circle (2pt);
\draw (0,0) -- (0,-1);
\draw (0,-1) -- (0,-2);
\draw (.4, -1) node {\tiny $(4,1)$};
\draw (.4, -2) node {\tiny $(4,2)$};
\end{tikzpicture}

\begin{tikzpicture}[scale=.8]
\draw (0,0) circle (3cm);
\draw (0,-2.75) -- (0,-3.25);
\draw (0,2.75) -- (0,3.25);
\draw (2.75,0) -- (3.25,0);
\draw (-2.75,0) -- (-3.25,0);
\draw (-90:4.2) node {$V_0$};
\fill (-90:3) circle (2pt);
\draw (-90:3.5) node {$(0,0)$};
\draw (-40:4.5) node {$V_1$};
\fill (-70:3) circle (2pt);
\draw (-70:3.5) node {$(3,2)$};
\fill (-35:3) circle (2pt);
\draw (-35:3.5) node {$(1,2)$};
\fill (-25:3) circle (2pt);
\draw (-25:3.6) node {$(1,4)$};
\draw (35:4.5) node {$V_2$};
\fill (15:3) circle (2pt);
\draw (15:3.6) node {$(2,3)$};
\fill (45:3) circle (2pt);
\draw (45:3.5) node {$(2,1)$};
\fill (60:3) circle (2pt);
\draw (60:3.5) node {$(4,1)$};
\draw (145:4.5) node {$V_3$};
\fill (100:3) circle (2pt);
\draw (100:3.5) node {$(3,1)$};
\fill (128:3) circle (2pt);
\draw (128:3.5) node {$(1,1)$};
\fill (157:3) circle (2pt);
\draw (157:3.6) node {$(1,3)$};
\draw (220:4.5) node {$V_4$};
\fill (200:3) circle (2pt);
\draw (200:3.6) node {$(2,2)$};
\fill (229:3) circle (2pt);
\draw (229:3.5) node {$(4,2)$};
\draw (-90:3) -- (128:3);
\draw (128:3) -- (-35:3);
\draw (-35:3) -- (157:3);
\draw (157:3) -- (-25:3);
\draw (-90:3) -- (45:3);
\draw (45:3) -- (200:3); 
\draw (200:3) -- (15:3);
\draw (-90:3) -- (100:3);
\draw (100:3) -- (-70:3);
\draw (-90:3) -- (60:3);
\draw (60:3) -- (229:3);
\end{tikzpicture}
\end{multicols}
\caption{At left, a spider labeled as described in the algorithm. At right, the same spider drawn as described in the algorithm.}
\label{fig:algorithm}
\end{center}
\end{figure}
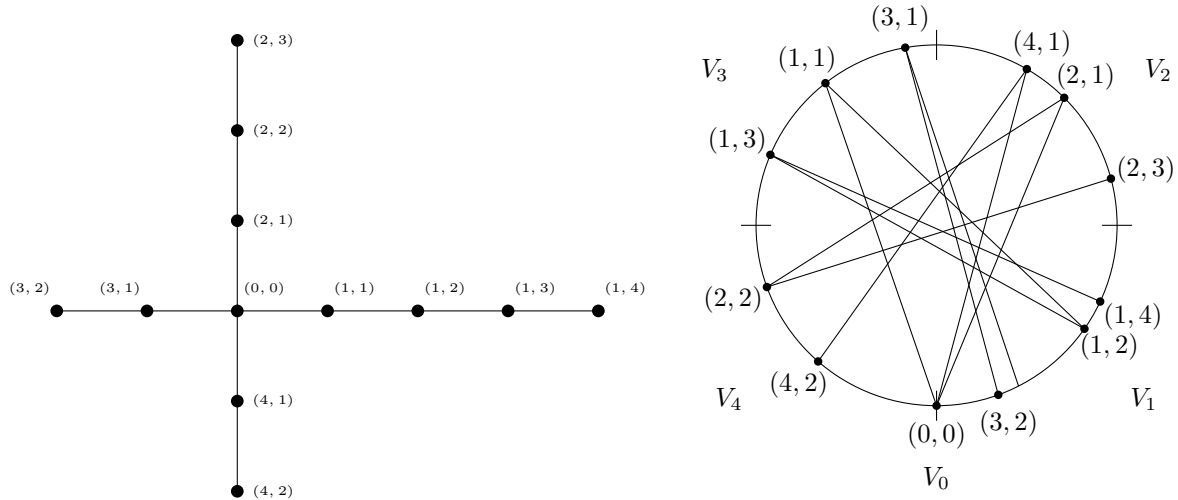
\end{ex}
This drawing algorithm establishes the following lower bound on all spiders.
\begin{prop}
\label{prop:LB}
Let $S$ be a spider with $k\geq 3$ legs of lengths $\ell_1\geq \ell_2\geq \cdots \geq\ell_k$. Then 
\[\displaystyle\mrcr \lp S\rp\geq \vartheta\lp S \rp-\sum_{i=3}^{k}\lp \ell_i-1\rp\left\lfloor\frac{i-1}{2}\right\rfloor.\]
\end{prop}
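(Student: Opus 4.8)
The plan is to count the crossings in the specific drawing $D$ produced by the algorithm; since $D$ is a legitimate good rectilinear drawing, any count of its crossings is a lower bound for $\mrcr(S)$. Because every vertex of $S$ lies on a common circle, $D$ is a convex drawing, and two non-incident edges cross if and only if their four endpoints alternate in the cyclic order $[V_0,V_1,V_2,V_3,V_4]$. A sufficiently small perturbation removes triple points and edge--vertex incidences without changing this cyclic order, hence without changing which pairs cross, so it does not affect the count. Consequently the number of crossings in $D$ equals $\vartheta(S)-B$, where $B$ is the number of \emph{bad pairs}: pairs of non-incident edges whose endpoints do not alternate. The proposition is therefore equivalent to the estimate $B\le\sum_{i=3}^{k}(\ell_i-1)\lfloor(i-1)/2\rfloor$, and I will in fact compute $B$ exactly.

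Call an edge $(0,0)-(i,1)$ a \emph{spoke} and every other edge a \emph{body edge}, so that leg $L_i$ has one spoke and $\ell_i-1$ body edges. I would organize the count of $B$ by the pair of legs to which the two edges belong. The first step is to show there are no bad pairs inside a single leg: for non-incident edges $(i,m)-(i,m+1)$ and $(i,m')-(i,m'+1)$ of $L_i$ I would verify from the sorting rules that the four endpoints always alternate. The second and main step is the cross-leg analysis: for two legs $L_j$ and $L_i$ with $j<i$ I claim the only bad pairs are those formed by the spoke of $L_j$ together with a body edge of $L_i$, and that these occur precisely when $j\equiv i\pmod 2$; in that case the spoke of $L_j$ misses \emph{every} one of the $\ell_i-1$ body edges of $L_i$, while all remaining cross-leg pairs alternate.

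Granting this, the arithmetic closes quickly. A fixed body edge of $L_i$ lies in a bad pair exactly with the spoke of each leg $L_j$ satisfying $j<i$ and $j\equiv i\pmod 2$, and the number of such $j$ is the number of integers in $\{1,\dots,i-1\}$ of the same parity as $i$, which is $\lfloor(i-1)/2\rfloor$. Summing over the $\ell_i-1$ body edges of each leg and over all legs gives
\[
B=\sum_{i=1}^{k}(\ell_i-1)\left\lfloor\tfrac{i-1}{2}\right\rfloor=\sum_{i=3}^{k}(\ell_i-1)\left\lfloor\tfrac{i-1}{2}\right\rfloor,
\]
the terms $i=1,2$ vanishing because no earlier leg shares their parity. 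Subtracting from $\vartheta(S)$ yields the stated bound, and the preceding example ($k=4$, legs $4,3,2,2$, two missed crossings) is exactly this count.

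The hard part is the cross-leg claim in the second step, because the placement is described by four separate sorting rules split according to the parities of both the leg index $i$ and the distance $j$. To verify alternation I would translate the rules into explicit arc positions: the spoke endpoint $(j,1)$ lands in block $V_3$ when $j$ is odd and in $V_2$ when $j$ is even, while the vertices $(i,1),(i,2),\dots$ of a leg are distributed among $V_1,\dots,V_4$ by the parity of the distance, with the ``decreasing $i$'' versus ``increasing $i$'' conventions controlling their order within each block. The crux is to show that the interleaving of the blocks forces all vertices of $L_i$ onto a single arc cut off by the spoke chord of $L_j$ exactly when $j<i$ and $j$ has the same parity as $i$ (producing the missed body edges), and otherwise splits them so that every body edge of $L_i$ alternates with the spoke. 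This amounts to a finite parity case analysis—four parity classes for $(i,j)$ against the two parities of the spoke—which I expect to be the most delicate and error-prone portion of the argument.
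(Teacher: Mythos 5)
Your proposal is correct and takes essentially the same approach as the paper: you count crossings in the algorithm's convex drawing and identify the bad pairs as exactly the spoke of $L_j$ against the $\ell_i-1$ body edges of $L_i$ for $j<i$ with $j\equiv i \pmod 2$, which is precisely the paper's Subcase 3.2, and your count of $\left\lfloor\frac{i-1}{2}\right\rfloor$ such legs $j$ matches the paper's tally. The finite parity casework you defer is exactly what the paper's Cases 1--3 carry out, and all of your claims check out against it (opposite-parity legs cross because one edge spans $V_1,V_3$ and the other $V_2,V_4$; same-parity pairs with both distances positive cross because the endpoints occur in the cyclic order $(i_2,\text{odd})$, $(i_1,\text{even})$, $(i_2,\text{even})$, $(i_1,\text{odd})$; and the spoke of the \emph{larger}-indexed leg does cross the other same-parity leg).
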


\begin{proof}
We claim that each pair of nonadjacent edges belonging to the same leg 
cross each other. We note that deleting $V_0$ and labeling $(0,0)$ as $(i,0)$ to place it in $V_1$ or $V_4$ is consistent with the cyclic ordering of vertices of $L_i$. Let $e_1=\lp i,j_1\rp$-$\lp i,j_1+1\rp$ and $e_2=\lp i,j_2\rp$-$\lp i,j_2+1\rp$ be edges of $L_i$ with $j_2>j_1+1$. We note that both $e_1$ and $e_2$ have an end-vertex in each of $V_m$ and $V_{m+2}$ for $m=1$ or $m=2$ (depending on the parity of $i$) and $e_1$'s end-vertices follow (or precede, again depending on the parity of $i$) $e_2$'s in both $V_m$ and $V_{m+2}$ since $j$ either increases in both or decreases in both. In each case, $e_1$ and $e_2$ cross.  For example, see Figure \ref{fig:sameleg}.

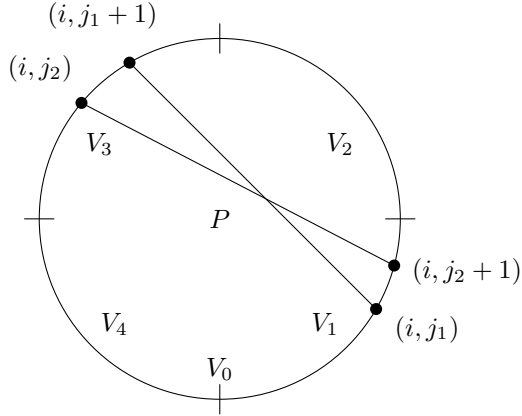
\begin{figure}
\begin{center}
\begin{tikzpicture}[scale=.8]
\draw (0,0) circle (3cm);
\draw (0,-2.75) -- (0,-3.25);
\draw (0,2.75) -- (0,3.25);
\draw (2.75,0) -- (3.25,0);
\draw (-2.75,0) -- (-3.25,0);
\draw (0,-2.5) node {$V_0$};
\draw (1.75,-1.75) node {$V_1$};
\draw  (2,1.25) node {$V_2$};
\draw (-2,1.25) node {$V_3$};
\draw (-1.75,-1.75) node {$V_4$};
\fill (-30:3) circle (.1cm);
\fill (-15:3) circle (.1cm);
\fill (120:3) circle (.1cm);
\fill (140:3) circle (.1cm);
\draw (-30:3) -- (120:3);
\draw (-15:3) -- (140:3);
\draw (-28:3.9) node {$(i,j_1)$};
\draw (-12:4.2) node {$(i,j_2+1)$};
\draw (120:3.9) node {$(i,j_1+1)$};
\draw (140:3.9) node {$(i,j_2)$};
\end{tikzpicture}
\caption{Two edges from the same leg cross when $i$ and $j_2$ are odd and $j_1$ is even.}
\label{fig:sameleg}
\end{center}
\end{figure}

Next we will consider the case where $e_1$ and $e_2$ are nonadjacent edges on different legs. Let $e_1=\lp i_1,j_1\rp$-$\lp i_1,j_1+1\rp$ and $e_2=\lp i_2,j_2\rp$-$\lp i_2,j_2+1\rp$ with $i_1\neq i_2$ . We distinguish three cases:
\begin{case}
	If $i_1$ and $i_2$ have different parity, then at most one of $e_1$ and $e_2$ may have an end vertex $(0,0)$, which we may consider without loss of generality to be the first vertex of $V_1$ or the last vertex of $V_4$.
    In this case, one of $e_1$ or $e_2$ has end vertices in $V_1$ and $V_3$ and the other has end vertices in $V_2$ and $V_4$. Thus, $e_1$ and $e_2$ cross because their end vertices alternate in cyclic order.
\end{case} 
\begin{case}
	If $i_1$ and $i_2$ have the same parity and both $j_1$ and $j_2$ are positive, then, without loss of generality, we assume $i_1 <i_2$.
    Then by construction, the cyclic order of the end vertices is $\lp i_2,odd\rp$-$\lp i_1,even \rp$-$\lp i_2, even\rp$-$\lp i_1,odd\rp$, so $e_1$ and $e_2$ cross. 
\end{case}
\begin{case}
	If  $e_1=\lp 0,0\rp$-$\lp i_1,1\rp$, $i_1$ and $i_2$ have the same parity, and $i_2$ is positive, then we consider two subcases:
    \begin{subcase}
    	If $i_1>i_2$ then: \begin{itemize}
\item $e_1$'s end vertices are in $V_0$ and $V_2$ and $e_2$'s are in $V_2$ (preceding $\lp i_1,1\rp$) and $V_4$.
\item $e_1$'s end vertices are in $V_0$ and $V_3$ and $e_2$'s are in $V_3$ (following $\lp i_1,1\rp$) and $V_1$.
\end{itemize}
Either way, $e_1$ and $e_2$ cross.
    \end{subcase}
    \begin{subcase}
    \label{subcase:nocross}
    	If $i_1<i_2$ then the end vertices of $e_1$ and $e_2$ do not alternate since $i$ increases in $V_2$ and decreases in $V_3$. This means that $e_1$ and $e_2$ do not cross.
    \end{subcase}
\end{case}
The only time that crossings are missed is in Case~\ref{subcase:nocross}.  In particular, we have $\ell_{i_2}-1$ missed crossings between $L_{i_1}$ and $L_{i_2}$. As there are $\left\lfloor\frac{i_2-1}{2}\right\rfloor$ such pairs $i_1,i_2$, each $i>2$ contributes $\lp \ell_i-1\rp\left \lfloor\frac{i-1}{2}\right \rfloor$ missed crossings. 
\end{proof}

For a spider with at least three legs, we have found a convex  drawing, that is, the vertices of the drawing are in convex position.   
This gives us a lower bound on the maximum rectilinear crossing number of spiders.  It was conjectured that every graph has a convex rectilinear drawing maximizing the rectilinear crossing number \cite{ConvexConjecture}. While this has been disproven \cite{ConvexConjectureDisproof}, it is still open for trees.

\section{An Upper Bound}
\label{sec:UB}

Our lower bound on the maximum rectilinear crossing number of a spider was achieved by constructing a good drawing of the spider. The logic here is reversed from what it is for the crossing number, where a specific drawing would show an upper bound. This suggests that obtaining an upper bound on the maximum crossing number (rather like the lower bound on the crossing number) is the tougher problem, since it requires an argument applying to all good drawings of the graph. We work with an auxillary graph, in which non-edges correspond to non-thrackled pairs of spider legs guaranteed by Theorem \ref{thm:S333}. For each non-edge of the auxiliary graph we subtract one from the thrackle bound to give our upper bound. Our result is stated in the following proposition.


\begin{prop}
\label{prop:UB}
	Let $S$ be a spider with $k\geq 3$ legs and assume each leg has length at least 2.  Then 
    \[
    	\mrcr(S) \leq \vartheta(S) - \left(\binom{k}{2} - \left\lfloor\frac{k^2}{4}\right\rfloor\right).
    \]
\end{prop}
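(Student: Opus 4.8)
The plan is to fix an arbitrary good rectilinear drawing $D$ of $S$ and to bound the number of \emph{missed crossings}, i.e.\ the non-incident edge pairs that fail to cross in $D$. Since the number of crossings of $D$ equals $\vartheta(S)$ minus the number of missed crossings, a lower bound of $\binom{k}{2}-\left\lfloor\frac{k^2}{4}\right\rfloor$ on the missed crossings holds for every drawing and hence bounds $\mrcr(S)$. To organize these missed crossings I would build an auxiliary graph $H$ on the vertex set $\{1,\dots,k\}$, one vertex per leg. For each leg $L_i$, let $T_i$ be its truncation to the first two edges $(0,0)$-$(i,1)$ and $(i,1)$-$(i,2)$, which exists because every leg has length at least $2$. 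I would join $i$ and $j$ in $H$ exactly when every non-incident pair consisting of one edge of $T_i$ and one edge of $T_j$ crosses in $D$, and call such a pair of legs \emph{thrackled}.

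The key step is to show that $H$ is triangle-free. Suppose $\{a,b,c\}$ were a triangle of $H$. Then all three truncated leg-pairs are thrackled, and the edges $T_a\cup T_b\cup T_c$ form exactly a copy of $S_3^2$ centered at $(0,0)$; restricting $D$ to these six edges yields a good rectilinear drawing of $S_3^2$, which therefore has at most $\mrcr(S_3^2)=8 < 9 = \vartheta(S_3^2)$ crossings. On the other hand, every non-incident pair of $S_3^2$ joins two distinct legs, so if all three leg-pairs are thrackled then every non-incident pair crosses and the restricted drawing attains the thrackle bound $9$, contradicting Theorem~\ref{thm:S333}. Hence $H$ has no triangle, and Mantel's theorem gives $|E(H)|\le\left\lfloor\frac{k^2}{4}\right\rfloor$, so $H$ has at least $\binom{k}{2}-\left\lfloor\frac{k^2}{4}\right\rfloor$ non-edges.

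Finally I would convert non-edges into distinct missed crossings. Each non-edge $\{i,j\}$ of $H$ means legs $i$ and $j$ are not thrackled, so some non-incident pair with one edge in $T_i$ and one in $T_j$ fails to cross in $D$; fix one such pair for each non-edge. Because the unordered leg-pair $\{i,j\}$ is recovered from the leg-membership of the two chosen edges, distinct non-edges produce distinct missed crossings. Summing over all non-edges yields at least $\binom{k}{2}-\left\lfloor\frac{k^2}{4}\right\rfloor$ missed crossings in $D$, which gives the stated bound.

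I expect the main obstacle to lie in the triangle-free step, specifically the bookkeeping that makes the contradiction with Theorem~\ref{thm:S333} airtight: one must verify that the restriction of $D$ to the six truncated edges really is a good rectilinear drawing of a genuine $S_3^2$ (sharing the single center $(0,0)$ and with the correct incidences), and that the statement ``all three truncated pairs are thrackled'' is exactly equivalent to that $S_3^2$ attaining its thrackle bound. This requires checking that no non-incident pair of $S_3^2$ is internal to a single leg and that the pairs incident at the center are correctly excluded, so that thrackledness of the three pairs accounts for all $9$ non-incident pairs and nothing else.
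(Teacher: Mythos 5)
Your proposal is correct and follows essentially the same route as the paper: an auxiliary graph on the $k$ legs with thrackled pairs as edges, triangle-freeness via Theorem~\ref{thm:S333}, Mantel's theorem, and one missed crossing per non-edge. Your truncation of each leg to its first two edges is a minor refinement (the paper uses full legs) that merely makes the extraction of a thrackled $S_3^2$ from a triangle slightly more explicit.
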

\begin{proof}
	Begin with a rectilinear drawing $D$ of $S$. Construct an auxiliary graph $G_S(D)$ on $k$ vertices in which each vertex corresponds to a particular leg of $S$ and each edge corresponds to a pair of legs which are thrackled in $D$.  A pair of legs is thrackled if any two nonadjacent edges in the two legs cross.
    
    We claim $G_S(D)$ is triangle-free.  If not, we would have three pairwise-thrackled legs of $S$ in $D$, which implies that a subgraph of $S$ isomorphic to $S_3^2$ is thrackled in $D$. This contradicts Theorem~\ref{thm:S333}.
    
    Since $G_S(D)$ is triangle-free, Mantel's Theorem \cite{Mantel} implies that $G_S(D)$ contains at most $\left\lfloor\frac{k^2}{4}\right\rfloor$ edges.  The minimum number of nonedges in $G_S(D)$ is thus $\binom{k}{2} - \left\lfloor\frac{k^2}{4}\right\rfloor$.  Since nonedges in $G_S(D)$ correspond to non-thrackled leg pairs, and each non-thrackled leg pair is missing at least one crossing, we get that
    \[
    	\mrcr(S) \leq \vartheta(S) - \left(\binom{k}{2} - \left\lfloor\frac{k^2}{4}\right\rfloor\right),
    \]
    as desired.
\end{proof}

We can now state an exact value for the maximum rectilinear crossing number of any spider in which all legs have length two.

\begin{thm}
\label{thm:Len2}
	Let $S_k^2$ be a spider with $k\geq 3$ legs where each leg has length 2.  Then
    \[
    	\mrcr(S_k^2) = \binom{2k}{2} - 2\binom{k}{2} - k + \left\lfloor\frac{k^2}{4}\right\rfloor.
    \]
\end{thm}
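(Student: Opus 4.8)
The plan is to prove the exact value by sandwiching $\mrcr(S_k^2)$ between the upper bound of Proposition~\ref{prop:UB} and the lower bound of Proposition~\ref{prop:LB}, and checking that the two match when every leg has length $2$. First I would verify the upper bound. Since all $k$ legs have length exactly $2$, Proposition~\ref{prop:UB} applies directly and gives
\[
\mrcr(S_k^2) \leq \vartheta(S_k^2) - \left(\binom{k}{2} - \left\lfloor\frac{k^2}{4}\right\rfloor\right).
\]
To match this against the claimed closed form, I would compute $\vartheta(S_k^2)$ explicitly. The graph $S_k^2$ has $2k$ edges, one vertex of degree $k$ (the center), $k$ vertices of degree $2$ (the midpoints of the legs), and $k$ leaves of degree $1$. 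Hence
\[
\vartheta(S_k^2) = \binom{2k}{2} - \binom{k}{2} - k\binom{2}{2} - k\binom{1}{2} = \binom{2k}{2} - \binom{k}{2} - k,
\]
since only the center and the midpoints contribute to the degree-sum term. Substituting this into the upper bound yields exactly $\binom{2k}{2} - 2\binom{k}{2} - k + \lfloor k^2/4\rfloor$, which is the claimed value.

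Next I would establish the matching lower bound from Proposition~\ref{prop:LB}. With $\ell_i = 2$ for every $i$, the subtracted term in that proposition becomes
\[
\sum_{i=3}^{k}(\ell_i - 1)\left\lfloor\frac{i-1}{2}\right\rfloor = \sum_{i=3}^{k}\left\lfloor\frac{i-1}{2}\right\rfloor,
\]
since each factor $\ell_i - 1$ equals $1$. The remaining step is the routine arithmetic identity
\[
\sum_{i=3}^{k}\left\lfloor\frac{i-1}{2}\right\rfloor = \binom{k}{2} - \left\lfloor\frac{k^2}{4}\right\rfloor,
\]
which I would verify by a short parity computation (splitting on whether $k$ is even or odd), so that the lower bound from Proposition~\ref{prop:LB} reads
\[
\mrcr(S_k^2) \geq \vartheta(S_k^2) - \left(\binom{k}{2} - \left\lfloor\frac{k^2}{4}\right\rfloor\right),
\]
identical to the upper bound. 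Combining the two inequalities forces equality and finishes the proof.

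The argument is structurally simple because essentially all the work has already been done in the two preceding propositions; the theorem is the special case where both bounds happen to coincide. The only genuine content is confirming that the lower-bound deficit $\sum_{i=3}^{k}\lfloor (i-1)/2\rfloor$ equals the upper-bound deficit $\binom{k}{2} - \lfloor k^2/4\rfloor$. I expect this summation identity to be the main (and only) obstacle, though it is a standard floor-function manipulation rather than a deep difficulty: one checks that $\sum_{i=3}^{k}\lfloor (i-1)/2\rfloor = \sum_{m=2}^{k-1}\lfloor m/2\rfloor$ and evaluates this closed form against $\binom{k}{2}-\lfloor k^2/4\rfloor = \lceil (k^2-2k)/4\rceil$ by treating even and odd $k$ separately. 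Once that identity is in hand, the equality $\mrcr(S_k^2) = \binom{2k}{2} - 2\binom{k}{2} - k + \lfloor k^2/4\rfloor$ follows immediately.
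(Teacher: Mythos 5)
Your proposal is correct and follows essentially the same route as the paper's own proof: compute $\vartheta(S_k^2) = \binom{2k}{2} - \binom{k}{2} - k$ from the degree sequence, apply Proposition~\ref{prop:UB} for the upper bound, specialize Proposition~\ref{prop:LB} to $\ell_i = 2$ for the lower bound, and verify by an even/odd parity split that $\sum_{i=3}^{k}\left\lfloor\frac{i-1}{2}\right\rfloor = \binom{k}{2} - \left\lfloor\frac{k^2}{4}\right\rfloor$, so the two bounds coincide. The paper performs exactly this sandwich argument, including the same case analysis on the parity of $k$ to evaluate the sum.
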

\begin{proof}
	First observe that $S_k^2$ has $n=2k+1$ vertices and $m=2k$ edges.  Additionally, one vertex of $S_k^2$ has degree $k$, $k$ vertices have degree $2$, and $k$ vertices have degree $1$.  Therefore, the thrackle bound for $S_k^2$ is 
    \[
    	\vartheta(S_k^2) = \binom{2k}{2}- \binom{k}{2} - k. \label{eqn:Len2ThrackleBd}
    \]
    According to Proposition~\ref{prop:UB},
    \begin{align*}
    	\mrcr(S_k^2) &\leq \vartheta(S) - \left(\binom{k}{2} - \left\lfloor\frac{k^2}{4}\right\rfloor\right) \\
        &= \binom{2k}{2} - 2\binom{k}{2} - k + \left\lfloor\frac{k^2}{4}\right\rfloor.
    \end{align*}
    Further, according to Proposition~\ref{prop:LB},
    \begin{align*}
    	\mrcr(S_k^2) &\geq \vartheta(S_k^2) - \sum_{i=3}^k\left\lfloor\frac{i-1}{2}\right\rfloor \\
        &= \vartheta(S_k^2) - \begin{cases}2\sum_{i=1}^{(k-2)/2}i & \text{ for $k$ even} \\ \frac{k-1}{2} + 2\sum_{i=1}^{(k-3)/2}i & \text{ for $k$ odd} \end{cases} \\
        &= \vartheta(S_k^2) - \binom{k}{2} + \begin{cases} \frac{k^2}{4} & \text{ for $k$ even} \\ \frac{k^2-1}{4} & \text{ for $k$ odd} \end{cases} \\
        &= \vartheta(S_k^2) - \binom{k}{2} + \left\lfloor\frac{k^2}{4}\right\rfloor \\
        &= \binom{2k}{2} - 2\binom{k}{2} - k + \left\lfloor\frac{k^2}{4}\right\rfloor.
    \end{align*}
    Together, these upper and lower bounds give us the desired equality.
\end{proof}



\section{Future Work}
\label{sec:futurework}

While the bounds given by our drawing algorithm and Mantel's theorem agree when each leg of the spider has length $2$, this method does not seem to generalize.  In our proof of the upper bound, we may only subtract one crossing from the thrackle bound for every triple of legs, but in a spider with longer legs our algorithm misses many more crossings.  We believe that these crossings must be missed, and thus we conjecture that the lower bound given in Proposition~\ref{prop:LB} is correct.~\footnote{Since the time of acceptance, Bennet, English, and Talanda-Fisher have settled this conjecture in the affirmative. They also found (and proved) the maximum rectilinear crossing numbers of trees with diameter $4$.~\cite{ConjectureProof}}

\begin{conj}\label{conj:mrcrspider}
	If $S$ is a spider with $k\geq 3$ legs of lengths $\ell_1\geq \ell_2 \geq \cdots \geq \ell_k$, then
    \[
    	\mrcr(S) = \vartheta(S) - \sum_{i=3}^k (\ell_i -1)\left\lfloor\frac{i-1}{2}\right\rfloor.
    \]
\end{conj}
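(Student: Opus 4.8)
The plan is to reduce the conjecture to a matching upper bound and then strengthen the auxiliary-graph argument of Proposition~\ref{prop:UB}. Since Proposition~\ref{prop:LB} already supplies the lower bound, the entire content of the conjecture is the inequality
\[
\mrcr(S) \leq \vartheta(S) - \sum_{i=3}^{k}(\ell_i-1)\left\lfloor\frac{i-1}{2}\right\rfloor,
\]
i.e.\ that \emph{every} good rectilinear drawing $D$ of $S$ misses at least $\sum_{i=3}^{k}(\ell_i-1)\lfloor(i-1)/2\rfloor$ of the $\vartheta(S)$ candidate crossings. The reason Proposition~\ref{prop:UB} is tight only in the all-length-$2$ case is that it extracts just one missed crossing from each non-thrackled pair of legs; for longer legs the target weight $\ell_{i_2}-1$ (the shorter leg's length minus one) suggests that a non-thrackled pair should be forced to miss crossings in proportion to the shorter leg. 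A first useful observation is that the drawing of Proposition~\ref{prop:LB} is extremal in a very clean way: its thrackled pairs are exactly the pairs of legs whose indices have opposite parity, so the auxiliary graph $G_S(D)$ is the balanced complete bipartite (Tur\'an) graph $K_{\lceil k/2\rceil,\lfloor k/2\rfloor}$ with $\lfloor k^2/4\rfloor$ edges, and each non-edge $\{i_1,i_2\}$ with $i_1<i_2$ carries exactly $\ell_{i_2}-1$ missed crossings.

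The first step I would attempt is a length-sensitive strengthening of Theorem~\ref{thm:S333}. The $k=3$ instance of the conjecture states that a spider with three legs of lengths $\ell_1\ge \ell_2\ge \ell_3\ge 2$ misses at least $\ell_3-1$ crossings in any good rectilinear drawing, with Theorem~\ref{thm:S333} being the base case $\ell_3=2$. I would try to prove this by running the case analysis of Proposition~\ref{prop:LB} in reverse: fix an arbitrary drawing of the three legs, record the cyclic order of the endpoints of the three central edges, and argue that any placement thrackling all three pairs embeds a thrackled copy of $S_3^2$, contradicting Theorem~\ref{thm:S333}; then show that each way of relieving that contradiction propagates along the shortest leg and forces $\ell_3-1$ missed crossings rather than just one.

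The second, harder step is aggregation into a weighted extremal inequality. Writing $T\subseteq\binom{[k]}{2}$ for the set of thrackled pairs and $\mu_{ij}(D)$ for the number of crossings a pair misses, Theorem~\ref{thm:S333} guarantees $T$ is triangle-free, and I would seek to show that for every realizable triangle-free $T$,
\[
\sum_{\substack{\{i,j\}\notin T\\ i<j}} \mu_{ij}(D) \;\ge\; \sum_{i=3}^{k}(\ell_i-1)\left\lfloor\frac{i-1}{2}\right\rfloor .
\]
This is a weighted Tur\'an/majorization problem: the leg-length weights $\ell_j-1$ are non-increasing in $j$, the number of non-edges among the first $j-1$ indices must be controlled, and one must show that the minimum of the left side over triangle-free $T$ is attained at the parity split realized by Proposition~\ref{prop:LB}. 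I expect a rearrangement or LP-duality argument to handle the optimization once the local input is available.

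The main obstacle, and the reason this remains a conjecture, is that the two steps do not cleanly decouple. Being non-thrackled means only that a \emph{single} pair of non-adjacent edges fails to cross, which by itself does not force $\ell_{i_2}-1$ missed crossings; so one cannot simply prove a per-pair bound $\mu_{ij}(D)\ge \ell_j-1$ and feed it into the weighted Tur\'an inequality, since that per-pair statement is false in general. A genuinely global argument is needed, coordinating the triangle-free thrackle structure of $G_S(D)$ with a length-weighted deficiency count across overlapping triples of legs, and it is precisely this coupling that Proposition~\ref{prop:UB} sidesteps (and pays for) by collapsing to the binary thrackled/non-thrackled distinction. An induction on the number of legs (deleting the shortest leg $L_k$ and bounding the crossings it contributes) is a plausible alternative route, but rectilinear monotonicity runs the wrong direction for an upper bound, so the inductive step would itself require the same quantitative control of how $L_k$ fails to thrackle the remaining legs.
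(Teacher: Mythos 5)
You were asked to prove Conjecture~\ref{conj:mrcrspider}, which the paper itself leaves open: there is no proof in the paper to compare against, only the lower bound of Proposition~\ref{prop:LB} and the weaker upper bound of Proposition~\ref{prop:UB}, which match only when every leg has length $2$ (Theorem~\ref{thm:Len2}). Your framing of the problem is accurate: the lower bound is already settled, so the conjecture is precisely the upper bound $\mrcr(S)\leq \vartheta(S)-\sum_{i=3}^{k}(\ell_i-1)\left\lfloor\frac{i-1}{2}\right\rfloor$ over all good rectilinear drawings. Your structural observation about the extremal drawing is also correct and consistent with the paper's case analysis: in the drawing of Proposition~\ref{prop:LB}, opposite-parity leg pairs are thrackled (Case~1), same-parity pairs miss exactly the crossings between the central edge of the lower-indexed leg and the $\ell_{i_2}-1$ non-central edges of the higher-indexed leg (Case~3, Subcase~3.2), so $G_S(D)$ is indeed $K_{\lceil k/2\rceil,\lfloor k/2\rfloor}$ with $\left\lfloor\frac{k^2}{4}\right\rfloor$ edges.

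However, your proposal is a research program, not a proof, and both load-bearing steps are left unestablished. First, the quantitative $k=3$ strengthening of Theorem~\ref{thm:S333} --- that every good rectilinear drawing of a three-leg spider misses at least $\ell_3-1$ crossings --- is supported only by the hope that ``each way of relieving the contradiction propagates along the shortest leg''; no mechanism is given, and Theorem~\ref{thm:S333} as stated yields exactly one missed crossing per thrackled-triple obstruction, which is why Proposition~\ref{prop:UB} cannot subtract more than $\binom{k}{2}-\left\lfloor\frac{k^2}{4}\right\rfloor$. Second, the weighted Tur\'an/LP aggregation is stated as a target inequality, but, as you yourself concede, it cannot be fed by a per-pair bound $\mu_{ij}(D)\geq \ell_j-1$ because that bound is false in general; nothing in the proposal supplies the substitute local input (say, a per-triple deficiency bound together with a fractional cover of non-edges by triples) that the optimization step would need, and without it the minimization over realizable triangle-free $T$ cannot even be formulated with nontrivial coefficients. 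This is exactly the obstruction the authors identify in Section~\ref{sec:futurework} when they say the Mantel-based method ``does not seem to generalize.'' In short: your diagnosis of why the problem is hard is correct and matches the paper's, but the gap is the entire upper bound --- no step beyond what Propositions~\ref{prop:LB} and~\ref{prop:UB} already give is actually proved.
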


Further, if our lower bound is correct, then we have exhibited a convex rectilinear drawing which achieves the maximum rectilinear crossing number. This would prove a special case of the conjecture in \cite{ConvexConjecture}. 
Though the conjecture is not true in general \cite{ConvexConjectureDisproof}, we believe that there is a convex drawing obtaining the maximum rectilinear crossing number for spiders and possibly trees in general. 

A difficult problem that is of interest to us is finding the maximum rectilinear crossing number for trees.  To this end, it would be nice to find a proof for Conjecture~\ref{conj:mrcrspider}.  If that can be done, some additional tree families to study are double spiders, which consist of two spider graphs with a single shared leg, and spiders whose legs have been replaced by caterpillars.

As mentioned in the introduction, there is some research done on finding extremal values for the maximum rectilinear crossing number in some graph families. It would be interesting to know which tree was the furthest from thrackleable. That is, what is the minimum value among the maximum rectilinear crossing numbers of trees on $n$ vertices?




\section{Acknowledgements}
This material is based upon work supported by the National Science Foundation under Grant Number DMS 1641020.  Sober\'on's research is also supported by the National Science Foundation Grant Number DMS 1851420. We would like to thank the American Mathematical Society, \'Eva Czabarka, Silvia Fern\'andez-Merchant, Gelasio Salazar, and L\'aszl\'o A. Sz\'ekely for organizing the Mathematics Research Communities workshop ``Beyond Planarity''.

\bibliographystyle{amsplain}
\bibliography{MaxCr}

\end{document}